\numberwithin{equation}{section}
\newtheorem{theorem}{Theorem}[section]
\newtheorem{corollary}[theorem]{Corollary}
\newtheorem{definition}[theorem]{Definition}
\newtheorem{example}[theorem]{Example}
\newtheorem{lemma}[theorem]{Lemma}
\newtheorem{proposition}[theorem]{Proposition}
\begin{document}

\title[Branching Multiplicity Spaces]{Tiling Branching Multiplicity Spaces\\ with $GL_2$ Pattern Blocks}

\author{Sangjib Kim}

\address{School of Mathematics and Physics\\
The University of Queensland\\ 
St Lucia QLD 4072, Australia}

\email{skim@maths.uq.edu.au}

\subjclass[2010]{20G05, 05E10}

\keywords{classical groups, representations, branching rules}

\thanks{This work was supported by the UQ NSRSF}

\begin{abstract}
We study branching multiplicity spaces of complex classical groups
in terms of $GL_2$ representations. 
In particular, we show how combinatorics of $GL_2$ representations 
are intertwined to make branching rules under the restriction of 
$GL_n$ to $GL_{n-2}$. We also discuss analogous results for 
the symplectic and orthogonal groups.
\end{abstract}

\maketitle


\section{Introduction}


\subsection{}

Branching rules describe a way of decomposing an irreducible representation of 
a whole group into irreducible representations of a subgroup. With applications 
in physics, branching rules for classical groups have been extensively studied. 
See, for example, \cite{Ki75, KT90, Pr94, Wh65}.

In this paper, we study combinatorial aspects of branching rules for complex classical 
groups, under the restriction of $GL_n$ to $GL_{n-2}$, $Sp_{2n}$ to $Sp_{2n-2}$, 
and $SO_m$ to $SO_{m-2}$, by investigating the $GL_2$ module structure of branching 
multiplicity spaces. Recently, Wallach, Yacobi and the current author studied 
$Sp_{2n}$ to $Sp_{2n-2}$ branching rules in terms of $SL_2$ representations 
\cite{KY11, WY09, Ya10}. Our results for the symplectic group are compatible 
with the ones in the above papers once we restrict $GL_2$ to $SL_2$.

\subsection{}

A group homomorphism $\phi_{\alpha}$ from the complex torus $(\mathbb{C}^*)^k$ 
to $\mathbb{C}^*$ defined by 
\begin{equation*}
\phi_{\alpha}(t_1, t_2, \dotsb, t_k) 
= t_1^{\alpha_1}t_2^{\alpha_2} \cdots t_k^{\alpha_k}
\end{equation*}
is called a \textit{polynomial dominant weight} of the complex general linear 
group $GL_k=GL_k(\mathbb{C})$, if it satisfies
\begin{equation*}
\alpha = \left( \alpha_1, \dotsc, \alpha_k \right) \in \mathbb{Z}^k
\hbox{\ \  and\ \  } \alpha_1 \geq \dotsb \geq \alpha_k \geq 0.
\end{equation*}
We shall identify the polynomial dominant weight $\phi_{\alpha}$ with the exponent 
$\alpha$. We can also identify $\phi_{\alpha}$ with Young diagram having $\alpha_i$ 
boxes in the $i$th row for all $i$. The sum $\alpha_1 + \dotsb + \alpha_k$ will be
denoted by $|\alpha|$. 

\smallskip

Then, by theory of highest weight, polynomial dominant weights uniquely label 
complex irreducible polynomial representations of the general linear group, 
and we will let $V_k^{\alpha}$ denote the irreducible representation of $GL_k$
labeled by Young diagram $\alpha$, or equivalently, highest weight 
$\alpha$. See, for example, \cite[\S 9]{GW09}.

\subsection{}

The irreducible representation $V_n^{\lambda}$ of $GL_n$ labeled by Young 
diagram $\lambda$ is completely reducible as a $GL_{n-2}$ representation. 
By Schur's lemma (for example, \cite[\S 1.2]{FH91}), for a pair of polynomial 
dominant weights $\lambda$ and $\mu$ of $GL_{n}$ and $GL_{n-2}$ respectively, 
the branching multiplicity of $V_{n-2}^{\mu}$ in $V_n^{\lambda}$ is equal to 
the dimension of the space 
\begin{equation}\label{multi-space}
V^{\lambda}|_{\mu} = Hom_{GL_{n-2}} \left( V_{n-2}^{\mu}, V_n^{\lambda} \right)
\end{equation}
of $GL_{n-2}$ homomorphisms, and then, as a $GL_{n-2}$ representation, 
$V_n^{\lambda}$ decomposes into isotypic components as 
\begin{equation}\label{isotypic}
V_n^{\lambda} = \bigoplus_{\mu} V_{n-2}^{\mu} \otimes Hom_{GL_{n-2}} 
\left( V_{n-2}^{\mu}, V_n^{\lambda} \right)
\end{equation}
where the summation runs over the highest weights $\mu$ of $V_{n-2}^{\mu}$ 
appearing in $V_n^{\lambda}$. In this sense, we call the space \eqref{multi-space} 
a \textit{$GL_n$ to $GL_{n-2}$ branching multiplicity space}.

\subsection{}

After a brief review on the representations of $GL_2$ in Section \ref{sec-GL2}, 
we describe the $GL_2$ module structure of $GL_n$ to $GL_{n-2}$ branching multiplicity 
spaces in Section \ref{sec-character}. We develop a combinatorial procedure of 
\textit{tiling} branching multiplicity spaces with $GL_2$ \textit{pattern blocks} 
in Section \ref{sec-tiling}. This procedure will show, in particular, how combinatorics 
of $GL_2$ representations can be intertwined to make branching rules under the restriction 
of $GL_n$ to $GL_{n-2}$. We will discuss analogous results for the branching of $Sp_{2n}$ 
to $Sp_{2n-2}$ and $SO_m$ to $SO_{m-2}$ in Section \ref{sec-other}.

\medskip


\section{Irreducible Representations of $GL_2$}\label{sec-GL2}


In this section, we review algebraic and combinatorial models for $GL_2$ 
representations.

\subsection{}\label{review-gl2}

For a polynomial dominant weight $(x,z)\in \mathbb{Z}^2$ of $GL_2$, the irreducible 
representation with highest weight $(x,z)$ can be realized as 
\begin{equation}\label{rep-space1}
V^{(x,z)}_2 = \mathbb{C} \otimes Sym^{x-z}(\mathbb{C}^2)
\end{equation}
where $g \in GL_2$ acts on the spaces $\mathbb{C}$ and $\mathbb{C}^2$ via scaling 
by the factor of $det(g)^{z}$ and matrix multiplication, respectively. Here, 
$Sym^{d}(\mathbb{C}^2)$ denotes the $d$th symmetric power of the space $\mathbb{C}^2$, 
and $det(g)$ denotes the determinant of the matrix $g\in GL_2$. 
See, for example, \cite[\S 15.5]{FH91}.

\subsection{}

The irreducible representations of $GL_k$ can be described in terms of 
\textit{Gelfand-Tsetlin patterns} \cite{GT50}. For $GL_2$, Gelfand-Tsetlin 
patterns for $V^{(x,z)}_2$ are triangular arrays of the form
\begin{equation*}
\left[
\begin{array}{ccc}
x &   & z \\
  & y &
\end{array}
\right]
\end{equation*}
with $y \in \mathbb{Z}$ and $x \geq y \geq z$, which can label weight basis 
vectors $v \in V^{(x,z)}_2$ 
\begin{equation*}
\left(
\begin{array}{cc}
t_1 & 0 \\
0 & t_2 
\end{array}
\right) \cdot v =  ({t_1}^y {t_2}^{x+z-y}) v
\end{equation*}
for all diagonal matrices $diag(t_1,t_2)$ of $GL_2$. 
See, for example, \cite[\S 8.1]{GW09} or \cite{Mo06}.
Then, the character of the $GL_2$ representation $V^{(x,z)}_2$ is
\begin{equation}\label{GL2-character}
ch_{(x,z)} (t_1,t_2) = \sum_y {t_1}^y {t_2}^{x+z-y}
\end{equation}
where the summation runs over all integers $y$ such that $x \geq y \geq z$,
or equivalently, over all Gelfand-Tsetlin patterns with top row $(x,z)$.

\subsection{}\label{SL2GL2}

We remark that if we restrict $GL_2$ down to its subgroup $SL_2$, then 
$V^{(x,z)}_2$ is isomorphic to $Sym^{x-z}(\mathbb{C}^2)$. Its character 
can be given as, by taking $t_1=t$ and $t_2 = t^{-1}$ in (\ref{GL2-character}), 
\begin{equation*}
ch_{(d)}(t) = t^{-d} + t^{-d+2} + \dotsb + t^{d-2} + t^{d}
\end{equation*} 
where $d=x-z$. See, for example, \cite[\S 11.1]{FH91} or \cite[\S 2.3]{GW09}.

\medskip


\section{Branching Multiplicity Spaces}\label{sec-character}


In this section, we study the $GL_2$ module structure of $GL_n$ to $GL_{n-2}$
branching multiplicity spaces.

\subsection{}

Let us recall branching rules for $GL_k$ down to $GL_{k-1}$, under the embedding 
of $GL_{k-1}$ in the upper left corner of $GL_k$. For polynomial dominant weights 
$\alpha$ and $\beta$ of $GL_k$ and $GL_{k-1}$, respectively, we write 
$\beta \sqsubseteq \alpha$ and say \textit{$\beta$ interlaces $\alpha$}, if 
\begin{equation*}
\alpha_1 \geq \beta_1 \geq \alpha_2 \geq \beta_2 \geq 
\dotsb \geq \alpha_{k-1} \geq \beta_{k-1} \geq \alpha_k.
\end{equation*}

\begin{lemma}[\protect{\cite[\S 8.1]{GW09}, \cite{Mo06}}]
Let $\alpha$ and $\beta$ be polynomial dominant weights of 
$GL_k$ and $GL_{k-1}$, respectively. 
\begin{enumerate}
\item The multiplicity of a $GL_{k-1}$ irreducible representation $V_{k-1}^{\beta}$ 
in $V_{k}^{\alpha}$, as a $GL_{k-1}$ representation, is at most one. It is precisely 
one, when $\beta$ interlaces $\alpha$.
\item As a $GL_{k-1} \times GL_1$ representation, $V_k^{\alpha}$ decomposes as
\begin{equation*} 
V_k^{\alpha} = \bigoplus_{\beta \sqsubseteq  \alpha} V_{k-1}^{\beta} 
\hat{\otimes} V_1^{(|\alpha|-|\beta|)}
\end{equation*}
where the summation runs over all $\beta$ interlacing $\alpha$.
\end{enumerate}
\end{lemma}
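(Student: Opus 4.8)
The plan is to deduce both parts from the Gelfand-Tsetlin basis, the same combinatorial model that was reviewed for $GL_2$ in Section \ref{sec-GL2} and that exists for every $GL_k$. Recall the facts I will take as given from \cite[\S 8.1]{GW09} and \cite{Mo06}: a Gelfand-Tsetlin pattern for $V_k^\alpha$ is a triangular array of $k$ rows, the top row being $\alpha$, in which each row interlaces the one immediately above it; these patterns index a weight basis of $V_k^\alpha$; and the weight of the basis vector attached to a pattern is the vector whose $i$-th coordinate is the difference of the sums of two consecutive rows of the pattern. In particular the $GL_{k-1}$-part of the weight (the coordinates of $\mathrm{diag}(t_1,\dots,t_{k-1})$) depends only on the bottom $k-1$ rows, while the $t_k$-coordinate equals the top row sum minus the second row sum.

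First I would fix the second row of the pattern and call it $\beta$. The interlacing condition between the first and second rows reads $\alpha_1\ge\beta_1\ge\alpha_2\ge\dotsb\ge\alpha_{k-1}\ge\beta_{k-1}\ge\alpha_k$, which is exactly $\beta\sqsubseteq\alpha$ as defined above. Once $\alpha$ (forced) and $\beta$ are fixed, the remaining $k-1$ rows are, by definition, precisely a Gelfand-Tsetlin pattern for $GL_{k-1}$ with top row $\beta$. This sets up a bijection between Gelfand-Tsetlin patterns for $V_k^\alpha$ whose second row is $\beta$ and Gelfand-Tsetlin patterns for $V_{k-1}^\beta$, simply by deleting, respectively restoring, the top row.

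The next step is to check this bijection is an isomorphism of $GL_{k-1}$-modules. Since the $GL_{k-1}$ in the upper left corner acts through $\mathrm{diag}(t_1,\dots,t_{k-1})$ and the corresponding weight coordinates depend only on the bottom $k-1$ rows, the bijection matches weight spaces on the nose; as the bottom $k-1$ rows with fixed top row $\beta$ span exactly $V_{k-1}^\beta$, the $\beta$-isotypic piece is a single copy of $V_{k-1}^\beta$. Because distinct choices of $\beta$ give inequivalent $GL_{k-1}$-types, it follows that $V_{k-1}^\beta$ occurs in $V_k^\alpha$ with multiplicity one when $\beta\sqsubseteq\alpha$ and with multiplicity zero otherwise, which is part (1).

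For part (2) I would track the leftover $GL_1$ factor, namely the last diagonal entry $t_k$. By the weight rule its exponent on the vector attached to any pattern with second row $\beta$ is the top row sum minus the second row sum, that is $|\alpha|-|\beta|$, and this value is constant across the entire copy of $V_{k-1}^\beta$. Hence that copy carries the one-dimensional $GL_1$-representation $V_1^{(|\alpha|-|\beta|)}$, and assembling over all $\beta\sqsubseteq\alpha$ yields the asserted $GL_{k-1}\times GL_1$ decomposition. The only genuinely substantial ingredient is the existence of the Gelfand-Tsetlin basis with the stated weights, which is the classical fact I am citing; granting it, everything reduces to the combinatorial observation that stripping the top row of a $GL_k$ pattern produces a $GL_{k-1}$ pattern. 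As an alternative route one could avoid bases altogether and instead verify the equivalent Schur-polynomial identity $s_\alpha(t_1,\dots,t_k)=\sum_{\beta\sqsubseteq\alpha}s_\beta(t_1,\dots,t_{k-1})\,t_k^{|\alpha|-|\beta|}$ by sorting a semistandard tableau of shape $\alpha$ according to its entries equal to $k$, which occupy a horizontal strip of size $|\alpha|-|\beta|$, with the remaining entries filling a subtableau of shape $\beta$.
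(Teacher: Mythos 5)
The paper does not actually prove this lemma; it is quoted as a classical result from \cite[\S 8.1]{GW09} and \cite{Mo06}, so there is no internal argument to compare yours against. Your reconstruction is the standard one and is correct in substance, with two caveats worth making explicit. First, watch for circularity: in most treatments (including the two sources cited) the Gelfand--Tsetlin basis is \emph{defined} by iterating the very branching rule you are trying to prove, so ``granting the existence of the GT basis with the stated weights'' may already presuppose the conclusion. Your closing alternative is the honest self-contained route: prove the identity
\begin{equation*}
s_\alpha(t_1,\dots,t_k)=\sum_{\beta\sqsubseteq\alpha}s_\beta(t_1,\dots,t_{k-1})\,t_k^{|\alpha|-|\beta|}
\end{equation*}
by stratifying semistandard tableaux of shape $\alpha$ according to the horizontal strip occupied by the entries equal to $k$, and then invoke complete reducibility together with the linear independence of the irreducible characters $s_\beta(t_1,\dots,t_{k-1})$ to convert the character identity into the module decomposition. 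Second, and relatedly, a weight-preserving bijection between basis vectors is not by itself an isomorphism of $GL_{k-1}$-modules: the step ``the bijection matches weight spaces, hence the $\beta$-stratum is a single copy of $V_{k-1}^\beta$'' is really the character argument in disguise, and is cleaner stated that way. Once phrased via characters, part (1) (multiplicity at most one, equal to one exactly when $\beta\sqsubseteq\alpha$) and part (2) (the $t_k$-exponent $|\alpha|-|\beta|$ is constant on each isotypic piece, yielding the $GL_{k-1}\times GL_1$ decomposition) both follow exactly as you describe.
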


Next, let us consider polynomial dominant weights $\lambda$ and $\mu$ of $GL_n$ 
and $GL_{n-2}$, respectively. We say \textit{$\mu$ doubly interlaces $\lambda$}, 
if there exists a polynomial dominant weight $\kappa$ of $GL_{n-1}$ such that 
$\mu$ interlaces $\kappa$ and $\kappa$ interlaces $\lambda$, i.e., 
$\mu \sqsubseteq \kappa \sqsubseteq \lambda$. By applying the above lemma twice, 
it is straightforward to see that

\begin{proposition}\label{GL-one-one}
\begin{enumerate}
\item The irreducible representation $V_{n-2}^{\mu}$ appears in 
$V_{n}^{\lambda}$ as a $GL_{n-2}$ representation
if and only if $\mu$ doubly interlaces $\lambda$.

\item The multiplicity of  $V_{n-2}^{\mu}$ in $V_{n}^{\lambda}$
is equal to the number of all possible $\kappa$'s satisfying 
$\mu \sqsubseteq \kappa \sqsubseteq \lambda$.

\item As a $GL_{n-2} \times GL_1 \times GL_1$ representation, 
$V_n^{\lambda}$ decomposes as
\begin{equation*} 
V_n^{\lambda} = \bigoplus_{\mu \sqsubseteq \kappa} \bigoplus_{\kappa \sqsubseteq \lambda} 
V_{n-2}^{\mu} \hat{\otimes} \left( V_1^{(|\kappa|-|\mu|)} \hat{\otimes} 
V_1^{(|\lambda|-|\kappa|)} \right)
\end{equation*}
where the summation runs over all $\mu$ doubly interlacing $\lambda$ and 
$\kappa$ satisfying $\mu \sqsubseteq \kappa \sqsubseteq \lambda$.
\end{enumerate}
\end{proposition}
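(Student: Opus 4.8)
The plan is to obtain the restriction from $GL_n$ to $GL_{n-2}$ by composing two successive one-step restrictions, using the transitivity of the upper-left-corner embeddings $GL_{n-2}\subseteq GL_{n-1}\subseteq GL_n$. First I would apply the preceding branching lemma to $V_n^{\lambda}$ restricted to $GL_{n-1}\times GL_1$, where this $GL_1$ is the torus acting on the $n$th diagonal coordinate $t_n$. This produces
\[
V_n^{\lambda} = \bigoplus_{\kappa \sqsubseteq \lambda} V_{n-1}^{\kappa} \hat{\otimes} V_1^{(|\lambda|-|\kappa|)},
\]
with the sum running over all polynomial dominant weights $\kappa$ of $GL_{n-1}$ interlacing $\lambda$.

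Next I would apply the same lemma a second time, now to each summand $V_{n-1}^{\kappa}$ restricted to $GL_{n-2}\times GL_1$, where this second $GL_1$ acts on the $(n-1)$th coordinate $t_{n-1}$, giving
\[
V_{n-1}^{\kappa} = \bigoplus_{\mu \sqsubseteq \kappa} V_{n-2}^{\mu} \hat{\otimes} V_1^{(|\kappa|-|\mu|)}.
\]
Substituting this into the first decomposition and regrouping the outer tensor factors yields the statement in part (3): since the two $GL_1$ factors act on the distinct coordinates $t_{n-1}$ and $t_n$, they commute with one another and with $GL_{n-2}$, so $V_n^{\lambda}$ becomes a genuine $GL_{n-2}\times GL_1\times GL_1$ module whose summands are indexed by pairs $(\mu,\kappa)$ with $\mu \sqsubseteq \kappa \sqsubseteq \lambda$. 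Parts (1) and (2) then follow by collecting $GL_{n-2}$ isotypic components: the coefficient of $V_{n-2}^{\mu}$ equals the number of intermediate weights $\kappa$ with $\mu \sqsubseteq \kappa \sqsubseteq \lambda$, which is part (2), and this count is positive exactly when at least one such $\kappa$ exists, i.e.\ when $\mu$ doubly interlaces $\lambda$, which is part (1).

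The only subtle point, and the step I would check most carefully, is the compatibility of the embeddings: one must verify that restricting $V_n^{\lambda}$ first to $GL_{n-1}$ and then to $GL_{n-2}$ genuinely computes the restriction to the upper-left copy of $GL_{n-2}$ in $GL_n$, and that the two $GL_1$ characters land on the correct coordinates $t_{n-1}$ and $t_n$ with the stated exponents $|\kappa|-|\mu|$ and $|\lambda|-|\kappa|$. This is a bookkeeping matter rather than a conceptual one, since the exponent on each $GL_1$ is forced by conservation of total weight, and the complete reducibility of $V_n^{\lambda}$ as a $GL_{n-2}$ module guarantees that the iterated decomposition collects into the isotypic form asserted in the proposition.
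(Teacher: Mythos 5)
Your proposal is correct and follows exactly the route the paper takes: the paper's entire proof is the remark that the proposition follows ``by applying the above lemma twice,'' which is precisely your iterated restriction through $GL_{n-1}$ with the two $GL_1$ factors acting on the coordinates $t_{n-1}$ and $t_n$. Your additional care about the compatibility of the upper-left embeddings and the placement of the $GL_1$ characters is sound bookkeeping that the paper leaves implicit.
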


By comparing \eqref{isotypic} and Proposition \ref{GL-one-one}, 
we can describe the branching multiplicity space 
\begin{equation*}
V^{\lambda}|_{\mu} = Hom_{GL_{n-2}} \left( V_{n-2}^{\mu}, V_n^{\lambda} \right)
\end{equation*}
in terms of integral sequences $\kappa$ such that $\mu \sqsubseteq \kappa 
\sqsubseteq \lambda$, or arrays of the form
\begin{equation*}
\left[
\begin{array}{ccccccccccc}
\lambda_1 &         &\lambda_2 &         &\lambda_3 &         & \cdots &         & \lambda_{n-1} &            & \lambda_n   \\
          & \kappa_1 &        & \kappa_2 &          &\kappa_3 &         &\cdots &              & \kappa_{n-1} &   \\
          &         &\mu_1    &         &\mu_2 &         & \cdots  &          & \mu_{n-2}    &              &                  
\end{array}
\right]
\end{equation*}
where the entries are weakly decreasing along the diagonals from left to right, 
which we will call \textit{interlacing patterns}.

\subsection{}

Our next task is to show that every $GL_n$ to $GL_{n-2}$ branching multiplicity 
space can be factored into $GL_2$ representations. For polynomial dominant weights 
$\lambda$ and $\mu$ of $GL_{n}$ and $GL_{n-2}$ respectively, let $\mathcal{IP}(\lambda,\mu)$ 
be the set of interlacing patterns whose top and bottom rows are $\lambda$ and $\mu$ 
respectively. Also, for a sequence $\sigma$ of weakly decreasing nonnegative integers 
\begin{equation*}
\sigma_1 \geq  \sigma_2 \geq  \dotsb \geq \sigma_{2n-3} \geq \sigma_{2n-2},
\end{equation*} 
let $\mathcal{GT}(\sigma)$ be the set of all $(n-1)$-tuples of Gelfand-Tsetlin 
patterns for $GL_2$ whose top rows are $(\sigma_{2i-1},\sigma_{2i})$ for $1 \leq i \leq n-1$.

\begin{theorem}\label{thm-tiling}
Let $\lambda$ and $\mu$ be polynomial dominant weights of $GL_n$ and 
$GL_{n-2}$, and $\sigma=\sigma(\lambda,\mu)$ be the sequence 
$(x_1, z_1, \dotsc, x_{n-1}, z_{n-1})$ obtained by rearranging the sequence
\begin{equation*}
(\lambda_1, \lambda_2, \dotsc, \lambda_n, \mu_1, \mu_2, \dotsc, \mu_{n-2})
\end{equation*}
in weakly decreasing order, i.e., $x_1 \geq  z_1 \geq  \dotsb \geq x_{n-1} \geq z_{n-1}$.
Then, the map from $\mathcal{IP}(\lambda,\mu)$ to $\mathcal{GT}(\sigma)$ sending
\begin{equation*}
\left[
\begin{array}{ccccccccccc}
\lambda_1 &         &\lambda_2 &         &\lambda_3 &         & \cdots &         & \lambda_{n-1} &            & \lambda_n   \\
          & \kappa_1 &        & \kappa_2 &          &\kappa_3 &         &\cdots &              & \kappa_{n-1} &   \\
          &         &\mu_1    &         &\mu_2 &         & \cdots  &          & \mu_{n-2}    &              &                  
\end{array}
\right]
\end{equation*}
to
\begin{equation*}
\left(
\begin{array}{cccc}
\left[
\begin{array}{ccc}
x_1 &   & z_1 \\
  & \kappa_1 &
\end{array}
\right], 
&
\left[
\begin{array}{ccc}
x_2 &   & z_2 \\
  & \kappa_2 &
\end{array}
\right],
&
\dots \ ,
&
\left[
\begin{array}{ccc}
x_{n-1} &   & z_{n-1} \\
  & \kappa_{n-1} &
\end{array}
\right]
\end{array}
\right)
\end{equation*}
is a bijection.
\end{theorem}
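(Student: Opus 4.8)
The plan is to construct an explicit inverse to the proposed map and verify that both the map and its inverse are well-defined, which together establish the bijection. The central observation is that the defining condition for an interlacing pattern is exactly the system of inequalities $\lambda_1 \geq \kappa_1 \geq \lambda_2 \geq \kappa_2 \geq \dotsb$ together with $\kappa_1 \geq \mu_1 \geq \kappa_2 \geq \mu_2 \geq \dotsb$, arranged so that the entries weakly decrease along the two families of diagonals. The key combinatorial fact I would isolate first is that, for each index $i$, the entry $\kappa_i$ is sandwiched between the two values that become $x_i$ and $z_i$ after sorting: concretely, I claim that $x_i \geq \kappa_i \geq z_i$ holds for every $i$, so that each pair $(x_i, z_i)$ together with $\kappa_i$ forms a legitimate $GL_2$ Gelfand-Tsetlin pattern with top row $(x_i, z_i)$. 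Establishing this sandwiching relation is the heart of the argument.

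First I would set up notation carefully for the sorting step. The merged multiset $\{\lambda_1, \dotsc, \lambda_n, \mu_1, \dotsc, \mu_{n-2}\}$ has $2n-2$ elements, and after sorting into weakly decreasing order we label them $x_1, z_1, x_2, z_2, \dotsc, x_{n-1}, z_{n-1}$, so that $x_1 \geq z_1 \geq x_2 \geq z_2 \geq \dotsb \geq x_{n-1} \geq z_{n-1}$. I would then prove the sandwiching $x_i \geq \kappa_i \geq z_i$ by a counting argument on ranks. The crucial point is that the interlacing conditions force $\kappa_i$ to have a controlled position in the sorted order of the $\lambda$'s and $\mu$'s: since $\lambda_1 \geq \dotsb \geq \lambda_i \geq \kappa_i$ and $\kappa_i \geq \mu_i \geq \dotsb$, one checks that at least $2i-1$ of the merged entries are $\geq \kappa_i$ and at least $2(n-1-i)+1$ are $\leq \kappa_i$, which pins $\kappa_i$ between the $(2i-1)$st and $(2i)$th largest merged values, namely between $x_i$ and $z_i$. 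I would verify these rank inequalities directly from the interlacing chain.

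Once the sandwiching is in hand, well-definedness of the forward map is immediate: each triangular array $\left[\begin{smallmatrix} x_i & & z_i \\ & \kappa_i & \end{smallmatrix}\right]$ satisfies $x_i \geq \kappa_i \geq z_i$, hence lies in $\mathcal{GT}$ with the prescribed top row $(x_i, z_i)$, so the image is a genuine element of $\mathcal{GT}(\sigma)$. For the inverse, given an $(n-1)$-tuple of $GL_2$ patterns with the fixed top rows determined by $\sigma$, I read off the middle entries $\kappa_1, \dotsc, \kappa_{n-1}$ and reassemble the interlacing pattern whose top row is $\lambda$ and bottom row is $\mu$. The content to check here is the reverse implication: that $\kappa = (\kappa_1, \dotsc, \kappa_{n-1})$ obtained this way actually satisfies $\mu \sqsubseteq \kappa \sqsubseteq \lambda$. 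Since the two maps are manifestly inverse to each other at the level of recording the $\kappa_i$ (the $\lambda$ and $\mu$ are held fixed throughout, and the sorting is a fixed deterministic operation), the bijection follows once both directions are shown to land in the correct sets.

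The main obstacle will be the reverse well-definedness, i.e., recovering the full interlacing chain $\mu \sqsubseteq \kappa \sqsubseteq \lambda$ from only the per-block constraints $x_i \geq \kappa_i \geq z_i$ together with the global sortedness of $\sigma$. The subtlety is that knowing each $\kappa_i$ lies in its own interval $[z_i, x_i]$ does not by itself obviously reproduce the cross-inequalities such as $\kappa_i \geq \lambda_{i+1}$ or $\mu_i \geq \kappa_{i+1}$, because those compare $\kappa_i$ against entries that may have been contributed by either $\lambda$ or $\mu$ and may sit in a different block. I expect to resolve this by tracking, for each sorted position, whether the value originated from $\lambda$ or from $\mu$, and showing that the block intervals $[z_i, x_i]$ telescope correctly under the weak-decrease-along-diagonals condition so that membership $\kappa_i \in [z_i, x_i]$ for all $i$ is in fact equivalent to the full double-interlacing condition. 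Making this equivalence precise, rather than merely one-directional, is where the real work lies.
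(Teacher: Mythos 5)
Your forward direction is correct and is a genuinely different (and arguably cleaner) argument than the paper's: the rank count showing that at least $2i-1$ of the $2n-2$ merged entries are $\geq \kappa_i$ and at least $2(n-1-i)+1$ are $\leq \kappa_i$ does pin $\kappa_i$ into $[z_i,x_i]$, and is robust to ties. The paper instead derives this by encoding interlacing patterns as tilings along directed paths and checking all eight configurations of two consecutive pattern blocks (Proposition \ref{gluing}); your counting argument avoids that case analysis entirely.

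The gap is in the reverse direction, which you correctly identify as ``where the real work lies'' but then leave as a plan rather than a proof. The statement you actually need is the interleaving identity that the paper records as \eqref{min-max}: if $\mu$ doubly interlaces $\lambda$, then $x_1=\lambda_1$, $z_{n-1}=\lambda_n$, and for $1\leq j\leq n-2$ the consecutive sorted entries $z_j\geq x_{j+1}$ are exactly $\max(\lambda_{j+1},\mu_j)$ and $\min(\lambda_{j+1},\mu_j)$. Granting this, the block conditions $x_i\geq\kappa_i\geq z_i$ unpack to $\lambda_1\geq\kappa_1$, $\kappa_j\geq\lambda_{j+1}$, $\kappa_j\geq\mu_j$, $\kappa_{j+1}\leq\lambda_{j+1}$, $\kappa_{j+1}\leq\mu_j$, and $\kappa_{n-1}\geq\lambda_n$, which is precisely $\mu\sqsubseteq\kappa\sqsubseteq\lambda$; so your ``telescoping'' hope is realized, but only through this identity, and you must prove it. Note that it genuinely uses the hypothesis that $\mu$ doubly interlaces $\lambda$ (equivalently $\lambda_j\geq\mu_j\geq\lambda_{j+2}$ for all $j$): verifying that the sorted sequence interleaves blockwise requires $\min(\lambda_{j+1},\mu_j)\geq\max(\lambda_{j+2},\mu_{j+1})$, and the cross inequalities $\lambda_{j+1}\geq\mu_{j+1}$ and $\mu_j\geq\lambda_{j+2}$ do not follow from dominance of $\lambda$ and $\mu$ alone. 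This also exposes a hypothesis you (and the theorem, read literally) are silently using: if $\mu$ does not doubly interlace $\lambda$, then $\mathcal{IP}(\lambda,\mu)=\emptyset$ while $\mathcal{GT}(\sigma)$ is never empty, so no bijection exists and your inverse map has nowhere to land. Supply a proof of \eqref{min-max} under the double-interlacing hypothesis and your argument closes; as written, the reverse well-definedness is asserted, not established.
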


We will prove the theorem in the context of pattern-tiling in Proposition \ref{gluing}. 
Our proof will show in particular how combinatorics of $GL_2$ representations are 
intertwined to make branching rules under the restriction of $GL_n$ 
to $GL_{n-2}$. We also note that a direct proof can be given by using 
the observation that if $\mu$ doubly interlaces $\lambda$, then $x_1=\lambda_1$, $z_{n-1}=\lambda_n$, and
\begin{equation}\label{min-max}
z_j = \max(\lambda_{j+1},\mu_j) \hbox{\ \  and\ \ } x_{j+1} = \min(\lambda_{j+1},\mu_j)
\end{equation}
for $1 \leq j\leq n-2$. 


As an immediate consequence of Theorem \ref{thm-tiling}, since 
there are exactly $(x-z+1)$ possible Gelfand-Tsetlin patterns with 
top row $(x, z)$, we have

\begin{corollary}
For $\mu$ doubly interlacing $\lambda$, the multiplicity of $V^{\mu}_{n-2}$ in 
$V^{\lambda}_n$, or equivalently, the dimension of the branching multiplicity 
space $V^{\lambda}|_{\mu}$ is
\begin{equation*}
\prod_{j=1}^{n-1}(x_j - z_j +1)
\end{equation*}
where $x_j$'s and $z_j$'s are defined from the rearrangement 
$(x_1, z_1, \dotsc, x_{n-1}, z_{n-1})$ of the sequence 
$(\lambda_1, \dotsc, \lambda_n, \mu_1, \dotsc, \mu_{n-2})$ in 
weakly decreasing order.
\end{corollary}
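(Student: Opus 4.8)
The plan is to read the dimension directly off the bijection of Theorem \ref{thm-tiling}, so that the argument reduces to a counting exercise. I would first recall that, by Schur's lemma as in \eqref{multi-space}, the dimension of $V^{\lambda}|_{\mu}$ equals the branching multiplicity of $V_{n-2}^{\mu}$ in $V_n^{\lambda}$; by Proposition \ref{GL-one-one}(2) this multiplicity is the number of sequences $\kappa$ with $\mu \sqsubseteq \kappa \sqsubseteq \lambda$, which by the definition of interlacing patterns is precisely the cardinality $|\mathcal{IP}(\lambda,\mu)|$.

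Next I would invoke Theorem \ref{thm-tiling}: since $\mu$ doubly interlaces $\lambda$, the set $\mathcal{IP}(\lambda,\mu)$ is nonempty, and the theorem furnishes a bijection onto $\mathcal{GT}(\sigma)$, whence $|\mathcal{IP}(\lambda,\mu)| = |\mathcal{GT}(\sigma)|$. It then remains only to count the latter set.

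The counting step rests on the decoupling of constraints effected by the bijection. An element of $\mathcal{GT}(\sigma)$ is an $(n-1)$-tuple of $GL_2$ Gelfand-Tsetlin patterns whose $j$th entry has the prescribed top row $(x_j, z_j)$ and a middle entry $\kappa_j \in \mathbb{Z}$ subject only to $x_j \geq \kappa_j \geq z_j$. As these middle entries range independently over $j$, the set $\mathcal{GT}(\sigma)$ is the Cartesian product of the $n-1$ sets of admissible middle entries, and each such set has exactly $x_j - z_j + 1$ elements. Multiplying over $j$ would then yield
\[
|\mathcal{GT}(\sigma)| = \prod_{j=1}^{n-1}(x_j - z_j + 1),
\]
as claimed.

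I do not expect any serious obstacle, since all of the substantive work is already carried out in Theorem \ref{thm-tiling}. The only point deserving care is the \emph{decoupling} itself: in an interlacing pattern the entries $\kappa_1, \dotsc, \kappa_{n-1}$ satisfy mutually coupled inequalities involving both $\lambda$ and $\mu$, whereas in $\mathcal{GT}(\sigma)$ each $\kappa_j$ is constrained only by its own pair $(x_j, z_j)$. It is exactly the bijection of Theorem \ref{thm-tiling} that converts the coupled interlacing conditions into independent box constraints, and once that is granted the product formula is immediate.
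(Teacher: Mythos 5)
Your proposal is correct and follows exactly the route the paper takes: the paper presents the corollary as an immediate consequence of Theorem \ref{thm-tiling} together with the observation that there are exactly $(x-z+1)$ Gelfand--Tsetlin patterns with top row $(x,z)$, which is precisely your counting of $|\mathcal{GT}(\sigma)|$ as a Cartesian product. Your additional remarks on reducing the dimension to $|\mathcal{IP}(\lambda,\mu)|$ via Proposition \ref{GL-one-one} and on the decoupling of the $\kappa_j$ constraints simply make explicit what the paper leaves implicit.
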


We note that this formula can be derived from \cite[Proposition 3.2]{Ya10}. 
See the remark after Theorem \ref{thm-GL2}.

\subsection{}

In the setting of Proposition \ref{GL-one-one}, consider the diagonal 
block $GL_2$ complement to $GL_{n-2}$ in $GL_n$:
\begin{equation*}
\left[
\begin{array}{cc}
g_1 & 0     \\
 0  & g_2 
\end{array}
\right] \in GL_n
\end{equation*}
where $g_1 \in GL_{n-2}$ and $g_2 \in GL_2$. Then, this $GL_2$ commutes with $GL_{n-2}$ 
acting on $V_{n-2}^{\mu}$ in \eqref{isotypic}, and therefore, the $GL_n$ to $GL_{n-2}$ 
branching multiplicity space carries the structure of a $GL_2$ module.

\begin{theorem}\label{thm-GL2}
For $\mu$ doubly interlacing $\lambda$, the $GL_n$ to $GL_{n-2}$ branching multiplicity 
space $V^{\lambda}|_{\mu}$ is, as a $GL_2$ representation, isomorphic to the tensor product 
of $GL_2$ irreducible representations
\begin{equation*} 
Hom_{GL_{n-2}} \left( V_{n-2}^{\mu}, V_n^{\lambda} \right) \cong 
\mathbb{C} \otimes V_2^{(x_1,z_1)} \otimes V_2^{(x_2, z_2)} \otimes \dots \otimes V_{2}^{(x_{n-1}, z_{n-1})}
\end{equation*}
where $\mathbb{C}$ is the one-dimensional representation given by $det(g)^{-|\mu|}$ for $g\in GL_2$;
and $x_j$ and $z_j$ are defined from the rearrangement $(x_1, z_1, \dotsc, x_{n-1}, z_{n-1})$ 
of the sequence $(\lambda_1, \dotsc, \lambda_n, \mu_1, \dotsc, \mu_{n-2})$ in weakly decreasing order.
\end{theorem}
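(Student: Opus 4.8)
The plan is to prove Theorem~\ref{thm-GL2} by computing the $GL_2$ character of the branching multiplicity space and matching it against the character of the claimed tensor product. The key observation is that Theorem~\ref{thm-tiling} already supplies a bijection between $\mathcal{IP}(\lambda,\mu)$ and $\mathcal{GT}(\sigma)$; what remains is to check that this bijection is \emph{weight-preserving} for the diagonal $GL_2$ action, so that it upgrades from a statement about dimensions (the Corollary) to an isomorphism of $GL_2$ modules.

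First I would identify how the diagonal block $GL_2$ acts on basis vectors of $V^{\lambda}|_{\mu}$. By Proposition~\ref{GL-one-one}(3), the interlacing patterns index a basis, and the torus $\mathrm{diag}(t_1,t_2)$ of the $GL_2$-block acts on the summand indexed by $(\lambda,\kappa,\mu)$ through the $GL_1\times GL_1$ factor $V_1^{(|\kappa|-|\mu|)}\hat{\otimes}\,V_1^{(|\lambda|-|\kappa|)}$; thus the diagonal element acts by the scalar $t_1^{\,|\kappa|-|\mu|}\,t_2^{\,|\lambda|-|\kappa|}$. Summing over all admissible $\kappa$ gives
\begin{equation*}
\mathrm{ch}\bigl(V^{\lambda}|_{\mu}\bigr)(t_1,t_2)
= \sum_{\mu \sqsubseteq \kappa \sqsubseteq \lambda}
t_1^{\,|\kappa|-|\mu|}\, t_2^{\,|\lambda|-|\kappa|}.
\end{equation*}
Next I would compute the character of the right-hand side. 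Using \eqref{GL2-character}, each factor $V_2^{(x_j,z_j)}$ contributes $\sum_{\kappa_j} t_1^{\kappa_j} t_2^{\,x_j+z_j-\kappa_j}$, and the prefactor $\mathbb{C}$ given by $\det(g)^{-|\mu|}$ contributes $t_1^{-|\mu|}t_2^{-|\mu|}$. Taking the product over $j$ and collecting exponents, the total $t_1$-exponent is $-|\mu|+\sum_j \kappa_j$ and the total $t_2$-exponent is $-|\mu|+\sum_j(x_j+z_j)-\sum_j\kappa_j$.

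The crux is then to verify that these two character expressions agree term by term under the Theorem~\ref{thm-tiling} bijection. Since $\sum_j \kappa_j = |\kappa|$ and $\sum_j(x_j+z_j)=|\lambda|+|\mu|$ because $\sigma$ is a rearrangement of $(\lambda,\mu)$, the $t_1$-exponent becomes $|\kappa|-|\mu|$ and the $t_2$-exponent becomes $|\lambda|+|\mu|-|\mu|-|\kappa|=|\lambda|-|\kappa|$, matching the left-hand side exactly. Because the bijection sends the pattern with middle row $\kappa$ to the tuple whose $GL_2$-pattern entries are exactly $\kappa_1,\dots,\kappa_{n-1}$, each interlacing pattern and its image carry the same $(t_1,t_2)$-weight, so the two characters coincide as Laurent polynomials. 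As both modules are $GL_2$-representations with equal characters, and $GL_2$ (being reductive) has representations determined up to isomorphism by their characters, the isomorphism follows. The main obstacle I anticipate is the bookkeeping in the first step: one must justify carefully that the diagonal $GL_2$ torus acts on the $\kappa$-summand precisely by $t_1^{|\kappa|-|\mu|}t_2^{|\lambda|-|\kappa|}$, which requires tracing the $GL_1\times GL_1$ weights through the iterated branching of Proposition~\ref{GL-one-one} and confirming that the determinant twist $\det(g)^{-|\mu|}$ correctly accounts for the shift coming from the $V_{n-2}^{\mu}$ factor being quotiented out in $\mathrm{Hom}_{GL_{n-2}}(V_{n-2}^{\mu},V_n^{\lambda})$.
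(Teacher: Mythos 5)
Your proposal is correct and follows essentially the same route as the paper: the paper's proof likewise reads off the character $\sum_{\kappa}t_1^{|\kappa|-|\mu|}t_2^{|\lambda|-|\kappa|}$ from Proposition \ref{GL-one-one}(3), multiplies by $(t_1t_2)^{|\mu|}$, uses $\sum_j(x_j+z_j)=|\lambda|+|\mu|$ together with the Theorem \ref{thm-tiling} bijection (which gives $x_j\geq\kappa_j\geq z_j$) to factor the sum into $\prod_j\sum_{\kappa_j}t_1^{\kappa_j}t_2^{x_j+z_j-\kappa_j}$, and concludes by matching characters. The only cosmetic difference is that you expand both sides and compare term by term rather than transforming one side into the other.
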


\begin{proof}
By taking $GL_1 \times GL_1$ in Proposition \ref{GL-one-one} as a maximal torus 
of $GL_2$, we can consider the following formula as the $GL_2$ character of the 
branching multiplicity space 
\begin{equation*}
ch(V^{\lambda}|_{\mu})=\sum_{\kappa} t_1^{|\kappa|-|\mu|} t_2^{|\lambda|-|\kappa|}
\end{equation*}
where $(t_1,t_2)\in GL_1 \times GL_1$ and the summation runs over all $\kappa$ such 
that $\mu \sqsubseteq \kappa \sqsubseteq \lambda$. Then,
\begin{eqnarray*}
&& (t_1 t_2)^{|\mu|} \cdot ch(V^{\lambda}|_{\mu}) = \sum_{\kappa}
 {t_1}^{|\kappa|} {t_2}^{|\lambda|+|\mu|-|\kappa|} \\
&=& \sum_{\kappa} {t_1}^{(\kappa_1 + \dotsb + \kappa_{n-1})}
{t_2}^{(x_1 + z_1 + \dotsb + x_{n-1} + z_{n-1}) - (\kappa_1 + \dotsb + \kappa_{n-1})} \\
&=& \prod_{j=1}^{n-1} \sum_{\kappa_j}{t_1}^{\kappa_j} {t_2}^{x_j + z_j - \kappa_j} 
\end{eqnarray*}
and, by Theorem \ref{thm-tiling}, we have $x_j \geq \kappa_j \geq z_j$ for each $j$. 
This shows that $ch(V^{\lambda}|_{\mu})$ is the product of $(t_1 t_2)^{-|\mu|}$, 
the character of the one dimensional representation twisted by $det(g)^{-|\mu|}$, 
and the characters of $V_2^{(x_j, z_j)}$'s. This finishes our proof.
\end{proof}

The following $SL_2$ module structure of the branching multiplicity space was studied 
by Yacobi in his thesis. See \cite[Proposition 3.2]{Ya10}.
\begin{equation*} 
Hom_{GL_{n-2}} \left( V_{n-2}^{\mu}, V_n^{\lambda} \right) \cong 
Sym^{x_1 - z_1}(\mathbb{C}^2) \otimes \dots \otimes Sym^{x_{n-1} - z_{n-1}}(\mathbb{C}^2).
\end{equation*}
Our theorem can be understood as a result obtained by lifting $SL_2$ to $GL_2$.

\medskip


\section{Tiling Branching Multiplicity Spaces}\label{sec-tiling} 


In this section, we develop a combinatorial procedure of tiling branching 
multiplicity spaces with Gelfand-Tsetlin patterns for $GL_2$, thereby proving 
Theorem \ref{thm-tiling}.

\subsection{}

First, in order to consider some directed paths in a graph, we place 
vertices on the coordinate plane as
\begin{equation*}\label{P-points}
P_n = \left\{(a,b): b=0, \  1 \leq a \leq n \right\} \cup 
 \left\{(a,b): b= -1, \ 2 \leq a \leq n-1 \right\}.
\end{equation*}
For example, $P_7$ is

\smallskip

\begin{equation*}
\setlength{\unitlength}{1mm}
\begin{picture}(80, 30)
\multiputlist(0,0)(10,0){\circle{2},  \circle{2},  \circle{2}, 
\circle{2}, \circle{2}, \circle{2}, \circle{2}, \circle{2}, \circle{2}}

\multiputlist(0,10)(10,0){\circle{2}, \circle{2},  \circle*{2}, 
\circle*{2}, \circle*{2}, \circle*{2}, \circle*{2}, \circle{2}, \circle{2}}

\multiputlist(0,20)(10,0){\circle{2}, \circle*{2},  \circle*{2}, 
\circle*{2}, \circle*{2}, \circle*{2}, \circle*{2}, \circle*{2},\circle{2}}

\multiputlist(0,30)(10,0){\circle{2}, \circle{2}, \circle{2}, 
\circle{2}, \circle{2}, \circle{2}, \circle{2}, \circle{2}, \circle{2}}

\multiputlist(45,20)(10,0){{\line(1,0){90}}}

\multiputlist(-1,15)(0,10){{\line(0,1){35}}}
\end{picture}
\end{equation*}

\smallskip

Then, we consider directed paths from $u=(1, 0)$ to $v=(n, 0)$ on $(2n-3)$ 
steps visiting each point in $P_n$ exactly once, when we are only allowed 
to move right($\rightarrow$) or up($\uparrow$) or down($\downarrow$) 
or up-right($\nearrow$) or down-right($\searrow$) at each step.

\begin{example} \label{fig-path} 
These are two paths for $P_6$ out of $16$ possible ones.
\begin{equation*}
\begin{array}{c}
\xymatrix{
u \ar[dr]  & \bullet \ar[r] & \bullet \ar[d] & \bullet \ar[dr] & \bullet \ar[r] & v \\
           & \bullet \ar[u] & \bullet \ar[r] & \bullet \ar[u]  & \bullet \ar[u]}
\\
{\ }
\\
\xymatrix{
u \ar[r]  & \bullet \ar[d] & \bullet \ar[d] & \bullet \ar[d] & \bullet \ar[r] & v \\
          & \bullet \ar[ur] & \bullet \ar[ur] & \bullet \ar[r]  & \bullet \ar[u]}              
\end{array}
\end{equation*}
\end{example}

Each directed path can be presented by a sequence of allowed steps.
For example, the two paths for $P_6$ in Example \ref{fig-path} can be 
presented as, respectively,
\begin{eqnarray*}
&& \left[ \hbox{\ } \searrow \hbox{\ \ \ } \uparrow \hbox{\ \ \ }  \rightarrow \hbox{\ \ \ }  \downarrow \hbox{\ \ \ }  
  \rightarrow  \hbox{\ \ \ } \uparrow  \hbox{\ \ \ } \searrow  \hbox{\ \ \ } \uparrow  \hbox{\ \ \ } \rightarrow \hbox{\ } \right],\\
&& \left[ \hbox{\ } \rightarrow  \hbox{\ \ \ } \downarrow  \hbox{\ \ \ } \nearrow  \hbox{\ \ \ } \downarrow  \hbox{\ \ \ } 
  \nearrow  \hbox{\ \ \ } \downarrow  \hbox{\ \ \ } \rightarrow  \hbox{\ \ \ } \uparrow  \hbox{\ \ \ } \rightarrow \hbox{\ } \right].
\end{eqnarray*}

At each step of a path, it is clear whether we are on the line $y=0$ or 
the line $y=-1$; and if we are on $y=0$ then the next step should be 
down($\downarrow$), and if we are on $y=-1$ then the next step should be 
up($\uparrow$). Therefore, in presenting directed paths for $P_n$ from 
$(1,0)$ to $(n,0)$, we may omit up($\uparrow$) and down($\downarrow$) 
arrows. Then, by denoting moving right($\rightarrow$) on the line $y=0$ 
and on the line $y=-1$ by harpoon-up($\rightharpoonup$) and 
harpoon-down($\rightharpoondown$), respectively, we can 
present every path uniquely with the following $4$ arrows:
\begin{equation*}
\searrow \hbox{\ ,\ \ \ } \rightharpoonup \hbox{\ ,\ \ \ }
\rightharpoondown \hbox{\ ,\ \ \ } \nearrow \ .
\end{equation*}

\subsection{}
 
From this observation, we define \textit{pattern blocks} attached to arrows 
and a \textit{tiling} given by a directed path.
\begin{definition}\label{arrow-table}
\begin{enumerate}
\item For each $i$ with $1 \leq i \leq n-1$, the $i$th pattern block corresponding 
to the down-right, harpoon-up, harpoon-down and up-right arrows are
\begin{center}
\begin{tabular}{|ccc|ccc|ccc|ccc|}
 \hline
 & $\searrow$ & & & $\rightharpoonup$ & & & $\rightharpoondown$ & & &$\nearrow$ & \\
 \hline \hline
$x_i$ &    &    & $x_i$   &   & $z_i$  &  &  &    &  &   & $z_i$  \\
  & $y_i$  &    &    & $y_i$  &   &   & $y_i$ &   &  & $y_i$ &    \\
  &    & $z_i$  &    &   &    & $x_i$ &   &  $z_i$  & $x_i$ &  &    \\ 
\hline
\end{tabular}
\end{center}

\item For each directed path from $(1,0)$ to $(n,0)$ of $P_n$, its tiling is 
the concatenation of pattern blocks defined by the sequence of arrows presenting 
the path such that
\begin{enumerate}
\item $y_i$ is at coordinate $(i+0.5, -0.5)$;
\item $x_i$ and $z_i$ above $y_i$ are at coordinates $(i,0)$ and $(i+1, 0)$, respectively;
\item $x_i$ and $z_i$ below $y_i$ are at coordinates $(i,-1)$ and $(i+1, -1)$, respectively
\end{enumerate}
for $1 \leq i \leq n-1$.
\end{enumerate}
\end{definition}

With this definition, the two paths given in Example \ref{fig-path} can 
be presented as
\begin{equation*}
\left[ \hbox{\ }\searrow  \hbox{\ \ \ } \rightharpoonup \hbox{\ \ \ }
\rightharpoondown \hbox{\ \ \ } \searrow \hbox{\ \ \ } \rightharpoonup \hbox{\ }\right] 
\hbox{\ \ and\ \ }
\left[ \hbox{\ } \rightharpoonup \hbox{\ \ \ } \nearrow \hbox{\ \ \ }\nearrow 
\hbox{\ \ \ } \rightharpoondown \hbox{\ \ \ } \rightharpoonup  \hbox{\ }\right],
\end{equation*}
and the corresponding tilings are
\begin{equation*}
\left[
\begin{array}{ccccccccccc}
 {x_1} &     &   {x_2} &      & {z_2} &     & {x_4} &     &  {x_5} &     &  {z_5} \\
    &  {y_1} &      & {y_2}  &     &   y_3 &     & {y_4} &      & {y_5} &     \\
    &      & {z_1}  &      &  x_3 &     &  z_3 &     &  {z_4} &     &             
\end{array}
\right]
\end{equation*} 
 and
\begin{equation*}
\left[
\begin{array}{ccccccccccc}
 {x_1} &     &   {z_1} &      & {z_2} &     &  z_3 &     &   {x_5} &     &  {z_5} \\
    &  {y_1} &      & {y_2}  &     & y_3 &     & {y_4} &      &  {y_5} &     \\
    &      & {x_2} &      &  x_3 &     & {x_4} &     &  {z_4} &     &             
\end{array}
\right]
\end{equation*}
respectively.

\subsection{}\label{lammu}

For each tiling, we identify two subsequences of $(x_1,z_1,\dotsc,x_{n-1},z_{n-1})$.
Let $\lambda=(\lambda_1,\dotsc, \lambda_n)$ be the subsequence on the line $y=0$; and
$\mu=(\mu_1, \dotsc, \mu_{n-2})$ be the be the subsequence on the line $y=-1$. 
In the above example, $\lambda$ and $\mu$ are, respectively,
\begin{eqnarray*}
&& \lambda=(x_1,x_2,z_2,x_4,x_5,z_5) \hbox{\ \ and\ \ } \mu=(z_1,x_3,z_3,z_4);\\
&& \lambda=(x_1,z_1,z_2,z_3,x_5,z_5) \hbox{\ \ and\ \ } \mu=(x_2,x_3,x_4,z_4).
\end{eqnarray*}
We note that, with the order $x_1 \geq z_1 \geq x_2 \geq z_2 \geq \dots$,
the entries of the sequences $\lambda$ and $\mu$ satisfy the identities \eqref{min-max}.

The following proposition shows that the tiling procedure given in 
Definition \ref{arrow-table} provides the correspondence stated in 
Theorem \ref{thm-tiling}.

\begin{proposition}\label{gluing}
\begin{enumerate}
\item For a given tiling, let us impose the following order on 
the entries $x_i$'s and $z_i$'s of pattern blocks
\begin{equation*} 
x_1 \geq z_1 \geq x_2 \geq z_2 \geq \dotsb \geq x_{n-1} \geq z_{n-1},
\end{equation*}
and let $\lambda$ and $\mu$ be its subsequences placed on the lines $y=0$ and $y=-1$, 
respectively. If $y_i$ satisfies $x_i \geq y_i \geq z_i$ for each pattern block, then 
$\mu \sqsubseteq (y_1, \dotsc, y_{n-1}) \sqsubseteq \lambda$, i.e., for all $r$ and $s$,
\begin{equation*}
\lambda_r \geq y_r \geq \lambda_{r+1} \hbox{\ \ \ \ and\ \ \ \ } y_s \geq \mu_s \geq y_{s+1}.
\end{equation*}

\item Conversely, let an interlacing pattern 
\begin{equation*}
\mu \sqsubseteq (y_1, \dotsc, y_{n-1}) \sqsubseteq \lambda
\end{equation*} 
be given. If we place its entries  $\lambda_i$, $\mu_j$ and $y_k$ on coordinates 
$(i,0)$, $(j+1,-1)$ and $(k+0.5,-0.5)$ for all $i$, $j$ and $k$, then we obtain a tiling 
defined by the directed path connecting $\lambda_i$'s and $\mu_j$'s in weakly decreasing order. 
That is, if $(x_1, z_1, \dotsc, x_{n-1}, z_{n-1})$ is the rearrangement of the sequence 
$(\lambda_1, \dotsc, \lambda_n, \mu_1, \dotsc, \mu_{n-2})$ in weakly decreasing order,
then $x_i$, $y_i$ and $z_i$ form a pattern block and satisfy 
\begin{equation*}
x_i \geq y_i \geq z_i
\end{equation*}
for $1 \leq i \leq n-1$.
\end{enumerate}
\begin{proof}
It is enough to check out the inequalities for all possible two consecutive pattern blocks 
in a tiling listed below. Note that these are also all possible partial interlacing patterns 
with two triples $(x,y,z)$ and $(x',y',z')$.
\begin{equation*}
\begin{array}{ccc}
\left[
\begin{array}{ccccc}
 x &     &  x' &      &    \\
   & y   &     & y'  &        \\
   &      & z  &      & z'              
\end{array} 
\right]
&
\left[
\begin{array}{ccccccc}
x &    &  x' &     & z'   \\
  & y  &     & y'  &        \\
  &    & z   &     &               
\end{array} 
\right]
&
\left[
\begin{array}{ccccccc}
  &    &  x' &     &    \\
  & y  &     & y'  &        \\
x &    & z   &     &  z'             
\end{array} 
\right]

\\

\left[
\begin{array}{ccccccc}
  &   &  x' &     & z'   \\
  & y  &     & y'  &        \\
x &    &  z &     &             
\end{array} 
\right]
&
\left[
\begin{array}{ccccccc}
x  &   &  z &      &   \\
  & y  &     & y'  &        \\
  &    &  x' &     & z'            
\end{array} 
\right]
&
\left[
\begin{array}{ccccc}
 x &     & z &      & z'   \\
   & y   &     & y'  &        \\
   &      & x'  &      &              
\end{array} 
\right]

\\

\left[
\begin{array}{ccccccc}
 &    &  z &     &    \\
  & y  &     & y'  &        \\
x &    & x'   &     & z'              
\end{array} 
\right]
&
\left[
\begin{array}{ccccccc}
  &    &  z &     & z'   \\
  & y  &     & y'  &        \\
x &    &  x'   &     &              
\end{array} 
\right]
& \ 
\end{array}
\end{equation*}

In the first case, $(\lambda_1, \lambda_2)=(x,x')$ and 
$(\mu_1,\mu_2)=(z,z')$. With $x \geq z \geq x' \geq z'$, we have 
$x \geq y \geq z$ and $x' \geq y' \geq z'$ if and only if
\begin{equation*}
x \geq y \geq x' \geq y' \hbox{\ \ and\ \ } y \geq z \geq y' \geq z'.
\end{equation*}
In the second case, $(\lambda_1, \lambda_2, \lambda_3)=(x,x',z')$ and 
$\mu_1=z$. With $x \geq z \geq x' \geq z'$, we have $x \geq y \geq z$ and 
$x' \geq y' \geq z'$ if and only if 
\begin{equation*}
x \geq y \geq x' \geq y' \geq z' \hbox{\ \ and\ \ } y \geq z \geq y'.
\end{equation*}
The rest of the cases can be shown similarly.
\end{proof}
\end{proposition}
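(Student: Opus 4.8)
The plan is to reduce everything to the single equivalence that, for the middle entries $y_i$, the system of Gelfand--Tsetlin conditions $x_i \geq y_i \geq z_i$ $(1 \leq i \leq n-1)$ is equivalent to the interlacing conditions $\mu \sqsubseteq (y_1, \dots, y_{n-1}) \sqsubseteq \lambda$. Granting this, the correspondence of Theorem~\ref{thm-tiling} is a bijection for free: it fixes the top-row entries $x_i, z_i$ (these are determined by the rearrangement $\sigma(\lambda,\mu)$) and acts as the identity $\kappa_i = y_i$ on the middle entries, so the two sides are in bijection as soon as their defining inequalities cut out the same set of admissible tuples $(y_1, \dots, y_{n-1})$. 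The decisive observation is that every inequality appearing in either system is \emph{local}: a Gelfand--Tsetlin condition $x_i \geq y_i \geq z_i$ involves only block $i$, while an interlacing condition $\lambda_r \geq y_r \geq \lambda_{r+1}$ or $y_s \geq \mu_s \geq y_{s+1}$ relates a middle entry to the $\lambda$- and $\mu$-entries immediately adjacent to it, all of which lie within one block or within two consecutive blocks. Hence it is enough to verify the equivalence on each pair of consecutive pattern blocks.

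First I would confirm that the eight displayed configurations really exhaust all consecutive pairs. Writing $(x,y,z)$ and $(x',y',z')$ for two adjacent blocks, the right entry $z$ of the first and the left entry $x'$ of the second both occupy column $i+1$, which at an interior column carries exactly one point on the line $y=0$ and one on the line $y=-1$; thus $\{z, x'\}$ fill these two points, one per line, in one of two ways. Crossing these two arrangements with the global order $x \geq z \geq x' \geq z'$ forced by the weakly decreasing rearrangement yields precisely the eight tableaux, and one checks that each corresponds to a legitimate concatenation of the four arrow blocks of Definition~\ref{arrow-table}. At the two extreme columns $1$ and $n$ only the line $y=0$ is present, which forces $x_1 = \lambda_1$ and $z_{n-1} = \lambda_n$ in accordance with \eqref{min-max} and imposes no extra constraint.

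Then I would carry out the per-case verification. In each configuration I read off $\lambda$ as the entries on $y=0$ and $\mu$ as those on $y=-1$, and check that, under the fixed order $x \geq z \geq x' \geq z'$, the conjunction $x \geq y \geq z$ and $x' \geq y' \geq z'$ holds exactly when the relevant interlacing inequalities do. Each such check is elementary, because the global order already pins down the range in which $y$ and $y'$ may lie, so the equivalence collapses to reshuffling a short chain of inequalities, just as in the two cases written out in full. Conjoining these local equivalences over the consecutive pairs and appending the boundary identities at columns $1$ and $n$ gives the global equivalence, which is exactly statements (1) and (2): (1) is the forward implication applied blockwise, and (2) is the reverse implication together with the fact that placing an interlacing pattern on the coordinate grid and reading it off in weakly decreasing order produces an admissible directed path. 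The one point I would treat with real care is this local-to-global reduction and the exhaustiveness of the eight cases; the inequality manipulations inside each case are routine and essentially identical across all eight.
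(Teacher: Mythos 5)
Your proposal is correct and follows essentially the same route as the paper: reduce to a local equivalence checked on all eight possible configurations of two consecutive pattern blocks, then verify each by elementary inequality manipulation under the global order $x \geq z \geq x' \geq z'$. You are somewhat more explicit than the paper about why the local-to-global reduction is valid and why the eight cases are exhaustive, but the underlying argument is the same.
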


\subsection{}\label{ex-tiling}

We give an example illustrating tiling procedures, and therefore showing 
the $GL_2$ module structure of branching multiplicity spaces.
Let us consider polynomial dominant weights 
$(x_i,z_i) \in \{(8,5),(4,2), (1,0)\}$ of $GL_2$, and Gelfand-Tsetlin patterns
\begin{equation*}
\left( \left[
\begin{array}{ccc}
8 &   & 5 \\
  & y_1 &
\end{array}
\right],
\left[
\begin{array}{ccc}
4 &   & 2 \\
  & y_2 &
\end{array}
\right],
\left[
\begin{array}{ccc}
1 &    & 0 \\
  & y_3 &
\end{array}
\right] \right)
\end{equation*}
where $y_i \in \mathbb{Z}$ varies for $x_i \geq y_i \geq z_i$ for all $i$.

\smallskip

In order to assemble these $GL_2$ pattern blocks to build $GL_4$ to $GL_2$ branching 
multiplicity spaces, we consider all the directed paths for $P_4$. 
\begin{equation*}
\begin{array}{ccc}
\xymatrix{
u \ar[r]  & \bullet \ar[d] & \bullet \ar[d]  & v \\
          & \bullet \ar[ur]& \bullet \ar[ur] &  }
& &
\xymatrix{
u \ar[r]  & \bullet \ar[d] & \bullet \ar[r] & v \\
          & \bullet \ar[r] & \bullet \ar[u] &   }
\\
\xymatrix{
u \ar[dr] & \bullet \ar[r] & \bullet \ar[d]  & v \\
          & \bullet \ar[u] & \bullet \ar[ur] &  }
& &
\xymatrix{
u \ar[dr] & \bullet \ar[dr] & \bullet \ar[r] &  \bullet \\
          & \bullet \ar[u]  & \bullet \ar[u] & }               
\end{array}
\end{equation*}
They can be presented as, by using down-right, up-right, harpoon-up and 
harpoon-down arrows,
\begin{equation*}
\begin{array}{ccc}
\left[\hbox{\ \ } \rightharpoonup \hbox{\ \ \ \ } \nearrow \hbox{\ \ \ \ } \nearrow \hbox{\ \ } \right] 
& &
\left[ \hbox{\ \ }\rightharpoonup \hbox{\ \ \ \ } \rightharpoondown \hbox{\ \ \ \ } \rightharpoonup \hbox{\ \ } \right] 
\\
& &
\\
\left[\hbox{\ \ } \searrow \hbox{\ \ \ \ } \rightharpoonup \hbox{\ \ \ \ } \nearrow \hbox{\ \ } \right] 
& &
\left[\hbox{\ \ } \searrow \hbox{\ \ \ \ } \searrow \hbox{\ \ \ \ } \rightharpoonup \hbox{\ \ } \right].
\end{array}
\end{equation*}

Then, from Definition \ref{arrow-table}, we obtain the tilings
\begin{equation*}
\begin{array}{ccc}
\left[
\begin{array}{ccccccc}
 8 &     &  5 &      & 2 &     & 0   \\
   & y_1 &    & y_2  &   & y_3 &     \\
   &      & 4 &      & 1 &     &              
\end{array} 
\right]
&
\left[
\begin{array}{ccccccc}
8 &     &  5 &      & 1 &     & 0   \\
  & y_1 &    & y_2  &   & y_3 &     \\
  &      & 4 &      & 2 &     &              
\end{array} 
\right]
\\
\ & \ 
\\
\left[
\begin{array}{ccccccc}
8 &     &  4 &      & 2 &     & 0   \\
  & y_1 &    & y_2  &   & y_3 &     \\
  &      & 5 &      & 1 &     &              
\end{array} 
\right]
&
\left[
\begin{array}{ccccccc}
8 &     &  4 &      & 1 &     & 0   \\
  & y_1 &    & y_2  &   & y_3 &     \\
  &      & 5 &      & 2 &     &              
\end{array} 
\right]
\end{array}
\end{equation*}
corresponding to the branching multiplicity spaces 
\begin{equation*}
\begin{array}{ccc}
Hom_{GL_2} \left( V_2^{(4,1)}, V_4^{(8,5,2,0)} \right) & Hom_{GL_2} \left( V_2^{(4,2)}, V_4^{(8,5,1,0)} \right) \\
 & \\
Hom_{GL_2} \left( V_2^{(5,1)}, V_4^{(8,4,2,0)} \right) & Hom_{GL_2} \left( V_2^{(5,2)}, V_2^{(8,4,1,0)} \right)
\end{array}
\end{equation*}
which are, by Theorem \ref{thm-GL2}, as $GL_2$ representations, isomorphic to 
\begin{equation*}
\mathbb{C} \otimes V_2^{(8,5)} {\otimes} V_2^{(4,2)} {\otimes} V_2^{(1,0)} 
\end{equation*}
where $g\in GL_2$ acts on $\mathbb{C}$ by $det(g)^{-5}$, 
$det(g)^{-6}$, $det(g)^{-6}$ and $det(g)^{-7}$, respectively.

We note that if some of the entries in the sequence $(x_1,z_1, \dotsc, x_{n-1}, z_{n-1})$ are equal, 
then different paths may give the same tiling, and therefore the same branching multiplicity space.

\medskip


\section{Branching Multiplicity Spaces of Other Classical Groups}\label{sec-other}


As in the case of the general linear group, we can study the $GL_2$ module structure 
of branching multiplicity spaces for the symplectic group. We can also obtain similar 
results for the orthogonal group within certain stable ranges. For more about stable 
range conditions in branching rules for classical groups, we refer readers to \cite{K12}.

\subsection{}

By $Sp_{2n}$ and $SO_m$, we denote the complex symplectic group of rank $n$ 
and the complex special orthogonal group of rank $\lfloor m/2 \rfloor$, respectively. 
The dominant weights of $Sp_{2n}$ and $SO_{2n+1}$ are of the form 
$\left( \lambda_1, \lambda_2, \dotsc, \lambda_n \right) \in \mathbb{Z}^n$ 
with $\lambda_1 \geq \dotsb \geq \lambda_{n} \geq 0$; and the
dominant weights of $SO_{2n}$ are of the same form
with $\lambda_1 \geq \dotsb \geq \lambda_{n-1} \geq |\lambda_n|$. 

We will state branching rules for individual cases
(see, for example, \cite[\S 25.3]{FH91} or \cite[\S 8.1]{GW09})
with the convention of Gelfand-Tsetlin patterns, i.e. the entries 
in each array are weakly decreasing along the diagonals 
from left to right.

\subsection{}

Let $W_{2n}^{\lambda}$ be the irreducible representation of $Sp_{2n}$ with 
highest weight $\lambda$. Then for a dominant weight $\mu$ of $Sp_{2n-2}$,
the multiplicity of $W_{2n-2}^{\mu}$ in $W_{2n}^{\lambda}$ as a $Sp_{2n-2}$ representation is
equal to the number of $Sp_{2n}$ dominant weights $\kappa$ such that
\begin{equation*}
\left[
\begin{array}{ccccccccccc}
\lambda_1 &         &\lambda_2 &         & \lambda_3 &         & \cdots &        &\lambda_{n} &             \\
          &\kappa_1 &          &\kappa_2 &          &\kappa_3 &        & \cdots &           & \kappa_{n}    \\
          &         &\mu_1    &         & \mu_2  &             & \cdots &        &\mu_{n-1} &                         
\end{array}
\right]
\end{equation*}

Note that we can identify this $Sp_{2n}$ to $Sp_{2n-2}$ branching rule
with the $GL_{n+1}$ to $GL_{n-1}$ branching rule in Proposition \ref{GL-one-one}. 
Therefore, as $GL_2$ representations, we have
\begin{equation*}
Hom_{Sp_{2n-2}}\left(W_{2n-2}^{\mu},W_{2n}^{\lambda} \right) 
\cong Hom_{GL_{n-1}} \left(V_{n-1}^{\mu}, V_{n+1}^{\lambda'} \right)
\end{equation*}
where $\lambda'=(\lambda_1, \dotsc, \lambda_{n},0)$ (See \cite[Theorem 3.1]{Ya10}). 
Then, we can apply Theorem \ref{thm-tiling} to tile the $Sp_{2n}$ to $Sp_{2n-2}$ 
branching multiplicity space with $GL_2$ pattern blocks. From Theorem \ref{thm-GL2}, 
we can express the branching multiplicity space as a tensor product of $GL_2$ 
representations. Also, by restricting $GL_2$ to its subgroup $SL_2$ and using the 
explanation in Section \ref{SL2GL2}, we can obtain the $SL_2$ module structure of 
the branching multiplicity space.

We remark that Wallach and Yacobi studied $Sp_{2n}$ to $Sp_{2n-2}$ branching 
multiplicity spaces with $Sp_2 = SL_2$ and $n$ copies of $SL_2$'s in 
\cite{WY09, Ya10}, and Yacobi and the current author studied their algebraic and 
combinatorial properties in \cite{KY11}.

\subsection{}

Let $W_{2n+1}^{\lambda}$ be the irreducible representation of $SO_{2n+1}$ with
highest weight $\lambda$. Then for a dominant weight $\mu$ of $SO_{2n-1}$,
the multiplicity of $W_{2n-1}^{\mu}$ in $W_{2n+1}^{\lambda}$ as a $SO_{2n-1}$ representation is
equal to the number of dominant weights $\kappa$ of $SO_{2n}$ such that
\begin{equation*}
\left[
\begin{array}{ccccccccccc}
\lambda_1 &         &\lambda_2 &         & \lambda_3 &         & \cdots &        &\lambda_{n} &              \\
          &\kappa_1 &          &\kappa_2 &          &\kappa_3 &        & \cdots &           & |\kappa_{n}|   \\
          &         &\mu_1    &         & \mu_2  &             & \cdots &        &\mu_{n-1} &                          
\end{array}
\right]
\end{equation*}

Note that if $\mu_{n-1}=0$, then the interlacing condition makes
 $\kappa_n=0$ and this branching rule becomes
exactly the same as the $GL_{n}$ to $GL_{n-2}$ branching rule in Proposition \ref{GL-one-one}.
Therefore, if $\mu_{n-1}=0$, as $GL_2$ representations,
\begin{equation*}
Hom_{SO_{2n-1}}\left(W_{2n-1}^{\mu},W_{2n+1}^{\lambda} \right) 
\cong Hom_{GL_{n-2}} \left(V_{n-2}^{\mu'}, V_{n}^{\lambda} \right)
\end{equation*}
where $\mu'=(\mu_1, \dotsc, \mu_{n-2})$.
Similarly, if $\lambda_n =0$, as $GL_2$ representations,
\begin{equation*}
Hom_{SO_{2n-1}}\left(W_{2n-1}^{\mu},W_{2n+1}^{\lambda} \right) 
\cong Hom_{GL_{n-1}} \left(V_{n-1}^{\mu}, V_{n+1}^{\lambda'} \right)
\end{equation*}
where $\lambda'=(\lambda_1, \dotsc, \lambda_{n-1},0,0)$. 
Then, we can apply Theorem \ref{thm-tiling} and Theorem \ref{thm-GL2} 
to tile the $SO_{2n+1}$ to $SO_{2n-1}$ branching 
multiplicity space with $GL_2$ pattern blocks and to factor it into 
$GL_2$ representations or $SL_2$ representations.

\subsection{}

Let $W_{2n}^{\lambda}$ be the irreducible representation of $SO_{2n}$ 
with highest weight $\lambda$. Then for a dominant weight $\mu$ of $SO_{2n-2}$, 
the multiplicity of $W_{2n-2}^{\mu}$ in $W_{2n}^{\lambda}$ as a $SO_{2n-2}$ representation is
equal to the number of $SO_{2n-1}$ dominant weights $\kappa$ such that
\begin{equation*}
\left[
\begin{array}{ccccccccccc}
\lambda_1 &         &\lambda_2 &         & \lambda_3 &         & \cdots &        &\lambda_{n-1} &            & |\lambda_n | \\
          &\kappa_1 &          &\kappa_2 &          &\cdots &        & \kappa_{n-2} &           & \kappa_{n-1} &     \\
          &         &\mu_1    &         & \mu_2  &             & \cdots &        &\mu_{n-2} &             &  |\mu_{n-1}|              
\end{array}
\right]
\end{equation*}

If $\mu_{n-2}=0$, then the interlacing condition makes $\kappa_{n-1}=\lambda_{n}=\mu_{n-1}=0$ 
and this branching rule becomes exactly the same as the $GL_{n-1}$ to $GL_{n-3}$ branching rule
in Proposition \ref{GL-one-one}. Therefore, if $\mu_{n-2}=0$, then, as $GL_2$ representations,
\begin{equation*}
Hom_{SO_{2n-2}}\left(W_{2n-2}^{\mu},W_{2n}^{\lambda} \right) 
\cong Hom_{GL_{n-3}} \left(V_{n-3}^{\mu'}, V_{n-1}^{\lambda'} \right)
\end{equation*}
where $\mu'=(\mu_1, \dotsc, \mu_{n-3})$ and $\lambda'=(\lambda_1, \dotsc,\lambda_{n-1})$.
Similarly, if $\lambda_{n-1}=0$, then $\kappa_{n-1}= \lambda_n = \mu_{n-1}=0$ and 
as $GL_2$ representations,
\begin{equation*}
Hom_{SO_{2n-2}}\left(W_{2n-2}^{\mu},W_{2n}^{\lambda} \right) 
\cong Hom_{GL_{n-2}} \left(V_{n-2}^{\mu''}, V_{n}^{\lambda''} \right)
\end{equation*}
where $\mu''=(\mu_1, \dotsc, \mu_{n-2})$ and $\lambda''=(\lambda_1, \dotsc, \lambda_{n-2},0,0)$.
Then, we can apply Theorem \ref{thm-tiling} and Theorem \ref{thm-GL2} 
to tile the $SO_{2n}$ to $SO_{2n-2}$ branching 
multiplicity space with $GL_2$ pattern blocks and to factor it 
into $GL_2$ representations or $SL_2$ representations.

\section*{Acknowledgment}
The author thanks the late Professor Daya-Nand Verma for inspiring discussions
regarding several aspects of this work. He also thanks Piu Andamiro and Mishima 
Heihachi for their helpful suggestions and comments.

\end{document}